\theoremstyle{plain}
\newtheorem{teo}{Theorem}[section]
\newtheorem{cor}[teo]{Corollary}
\newtheorem{lema}[teo]{Lemma}
\newtheorem{obs}[teo]{Remark}
\newtheorem{ejp}[teo]{Example}
\theoremstyle{definition}
\newtheorem{df}[teo]{Definition}
\DeclareMathOperator{\iso}{Iso}
\DeclareMathOperator{\acu}{Lim}
\DeclareMathOperator{\adj}{Adj}
\DeclareMathOperator{\diam}{diam}
\DeclareMathOperator{\clos}{clos}
\DeclareMathOperator{\dist}{dist}
\newcommand{\per}{\hbox{Per}}
\newcommand{\R}    {\mathbb R}
\newcommand{\Z}  {\mathbb Z}
\newcommand{\N}  {\mathbb N}
\renewcommand{\epsilon}{\varepsilon}
\author{Alfonso Artigue\footnote{
Departamento de Matemática y Estadística del Litoral, Salto-Uruguay}}
\title{Hyper-expansive homeomorphisms}
\begin{document}
\date{\today}
\maketitle
\begin{abstract}
A homeomorphism on a compact metric space is said hyper-expansive if every pair of different 
compact sets are separated by the homeomorphism in the Hausdorff metric. 
We characterize such dynamics as those with a finite number of orbits and whose non-wandering set is the union of the 
repelling and the attracting periodic orbits. 
We also give a characterization of compact metric spaces admiting hyper-expansive homeomorphisms.
\end{abstract}


\section{Introduction} 
It is an important goal in topological dynamics to understand the global 
behavior of expansive homeomorphisms. 
In light of the work of J. Lewowicz and K. Hiraide of expansive homeomorphisms of surfaces (see \cite{H,L}) it seems that the key point is to determine 
the topological properties of stable sets. 
On manifolds of arbitrary dimension it is proved in the above mentioned papers that
the topological dimension of stable sets is positive (i.e., contains non-trivial connected sets) 
and smaller than the dimension of the manifold (i.e., there are no stable points). 
But it seems that more technology is needed in order to understand the topological structure of such sets. 

A proof of the cited results is based in the following tool. 
Take an arc or a continuum on the manifold, iterate it with the 
homeomorphism and consider an accumulation point in the Hausdorff metric on compact subsets. 
So, it seems that is of interest to consider the dynamics of sets instead of single points.
This fact was noticed by H. Kato, who introduced the notion of continuum-wise expansiveness. 
Consider $f\colon X\to X$ a homeomorphism on a compact metric space.
We say that $f$ is \emph{continuum-wise expansive} if 
there is $\delta>0$ such that if $C\subset X$ is a compact connected set (i.e., continuum) such that 
$\diam(f^nC)<\delta$ for all $n\in\Z$ then $C$ is a singleton. 

One can try the following definition: a homeomorphism is \emph{compact-wise expansive} if there is $\delta>0$ such that if 
$C\subset X$ is compact and $\diam f^n(C)<\delta$ for all $n\in\Z$ then $C$ is a singleton. 
But it is easy to see that this is equivalent with expansiveness. One just has to notice that $\diam(\{x,y\})=\dist(x,y)$ and that 
every non-trivial compact set has at least two different points. 
Expansiveness is also equivalent with what could be called \emph{set-wise expansiveness} (with analogous definition and proof). 
It is interesting to remark that \emph{open-wise expansiveness} is some kind of sensitive dependence on initial conditions.

Given a compact metric space we consider the space of all compact subsets $A\subset X$. 
That space is called the \emph{hyperspace} of $X$ and is denoted as $2^X$. 
The topology of $2^X$ is defined by the \emph{Hausdorff metric} $\dist_H$ defined as 
\[
 \dist_H(A,B)=\inf\{\epsilon>0:A\subset B_\epsilon (B)\hbox{ and }B\subset B_\epsilon(A)\}
\]
for all $A,B\in2^X$. As usual $B_\epsilon (A)$ denotes the set $\cup_{x\in A} B_\epsilon(x)$ where 
$B_\epsilon(x)=\{y\in X:\dist(x,y)<\epsilon\}$.
The hyperspace has very nice properties. 
For example, it is known that $2^X$ inherits the compactness of $X$. 
Also, if $X$ is connected then $2^X$ is arc-wise connected (see \cite{Nadler}). 
So, it is natural to extend the action of $f$ to $2^X$, simply as $\hat f\colon 2^X\to 2^X$ defined by 
$\hat f(A)=f(A)$. It gives a homeomorphism as can be easily verified. 
Some relationships are known between the dynamics of $f$ and $\hat f$. 
For example, if $f$ has positive topological entropy then $\hat f$ has infinite topological entropy (see Proposition 6 in \cite{BaSi}). 
 
The purpose of the present note is to study the expansiveness of the induced map $\hat f$, 
that is what we call \emph{hyper-expansiveness} of $f$. 
Notice that hyper-expansiveness is a stronger condition than expansiveness.
The following facts are known:
\begin{itemize}
\item if a compact metric space admits an expansive homeomorphism then its topological dimension is finite (see \cite{Ma}) and
\item if $\dim_{top} X>0$ then $\dim_{top} 2^X=\infty$ (this fact was first proved in \cite{Maz}, see also \cite{Nadler} Theorem 1.95).
\end{itemize}
Hence, if $2^X$ admits an expansive homeomorphism then $\dim_{top}X=0$.

It is known that expansiveness does not imply hyper-expansiveness. Indeed, in \cite{BaSi} it is noticed that
the shift map is not hyper-expansive (this can be deduced from the fact that the
shift map has infinite periodic points) while the shift map itself is expansive.
Those remarks on hyper-expansiveness were rediscovered in \cite{Sharma} (Proposition 2.23 and Example 2.24).
We give a simple characterization of hyper-expansiveness, 
statements and proofs are in the following Section.

Another important problem in topological dynamics is to determine what spaces 
admit expansive homeomorphisms. 
In \cite{KP} this problem is solved for countable compact spaces. 
As we will see, spaces admiting a hyper-expansive homeomorphism are countable. 
In this note we also give a characterization of compact spaces admiting hyper-expansive homeomorphisms.

In terms of the hyperspace, expansiveness can be characterized as follows. 
Let $F_1=\{\{x\}:x\in X\}\subset 2^X$ be the space of singletons. 
Notice that $F_1$ is $\hat f$-invariant, in fact $\hat f\colon F_1\to F_1$ is conjugated with $f\colon X\to X$.
By definition we have that $f$ is expansive if and only if $F_1$ is an isolated set for $\hat f$, i.e., 
there is an open set 
$U$ of $2^X$ such that $F_1= \cap_{n\in \Z} \hat f^nU$.

I would like to thank Damián Ferraro, Mario González and 
Ignacio Monteverde for useful conversations on these topics, José Vieitez 
for his corrections in the preliminary version of the note and the referee for 
his or her remarks. 

\section{Hyper-expansiveness}
Let $(X,\dist)$ be a compact metric space.

\begin{df}
 A homeomorphism $f\colon X\to X$ on a compact metric space is \emph{hyper-expansive} 
if $\hat f\colon 2^X\to 2^X$ is expansive, that is, there is $\delta>0$ such that if $\dist_H(f^nA,f^nB)<\delta$ for all $n\in\Z$, with $A$ and $B$ compact subsets of $X$, then $A=B$.
\end{df}

We need some definitions.
Given a point $p\in X$ we say that it is \emph{(Lyapunov) stable} if for all $\epsilon>0$ there is $\delta>0$ such that 
if $\dist(x,p)<\delta$ then $\dist(f^nx,f^np)<\epsilon$ for all $n\geq 0$. 
A point $p$ is said to be \emph{unstable} if it is stable for $f^{-1}$. 
We say that $p$ is \emph{asymptotically stable} if it is stable and there is $\gamma>0$ such that if $\dist(x,p)<\gamma$ 
then $\dist(f^nx,f^np)\to 0$ as $n\to \infty$. 
If $p$ is an asymptotically stable periodic orbit then the orbit of $p$ is said to be an \emph{attractor}. 
A \emph{repeller} is an attractor for $f^{-1}$.
Notice that isolated periodic points are stable and unstable by definition.
Let us denote 
\begin{itemize}
 \item $\Omega f$ the set of non-wandering points, i.e., $x\in\Omega f$ if for all $\epsilon>0$ there is $n>0$ such that 
$f^n(B_\epsilon x)\cap B_\epsilon x\neq\emptyset$,
\item $\per_r$ the set of repeller periodic points and $\per_a$ the set of attracting periodic points.
\end{itemize}

Now we can state the main result of this note.

\begin{teo}\label{teoprinc}
A homeomorphism $f\colon X\to X$ is hyper-expansive if and only if 
$f$ has a finite number of orbits and $\Omega f=\per_r\cup\per_a$.
\end{teo}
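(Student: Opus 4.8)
The plan is to prove both implications, treating necessity as the substantial direction. For necessity, suppose $\hat f$ is expansive with constant $\delta>0$. First I would record two easy reductions. Since $F_1$ is $\hat f$-invariant and $\hat f|_{F_1}$ is conjugate to $f$, expansiveness of $\hat f$ restricts to expansiveness of $f$. Next, observe that every periodic orbit $O$ of $f$ satisfies $\hat f(O)=f(O)=O$, so $O$ is a fixed point of $\hat f$ in $2^X$; since distinct fixed points of an expansive homeomorphism are at least $\delta$ apart (their orbits never separate), the set of periodic orbits is $\delta$-separated in the compact space $2^X$, hence finite. Thus $\per(f)$ is finite.

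The crux of necessity is to show $\Omega f=\per_r\cup\per_a$ together with finiteness of the number of orbits. The inclusion $\per_r\cup\per_a\subseteq\Omega f$ is immediate, so I must prove that every non-wandering point is a repelling or attracting periodic point, ruling out both ``saddle'' periodic orbits and non-periodic recurrence. The mechanism is to convert recurrence into a pair of distinct compact sets whose entire $\hat f$-orbits stay within $\delta$ in $\dist_H$. For a non-periodic recurrent point $z$, the orbit closure $\clos(\orb(z))$ is an infinite invariant set fixed by $\hat f$; the return times of $z$ to itself let me build compact sets (growing orbit segments, or the orbit closure with a controlled modification) that shadow one another under all iterates, and after passing to a Hausdorff limit in $2^X$ produce $A\neq B$ with $\sup_{n\in\Z}\dist_H(\hat f^nA,\hat f^nB)<\delta$, contradicting expansiveness. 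A saddle periodic orbit $O$ is excluded similarly: its nontrivial stable and unstable sets provide compact sets collapsing onto $O$ in forward and backward time, again yielding a forbidden pair. Once $\Omega f=\per_r\cup\per_a$ is known, finiteness of the number of orbits follows, since each non-periodic orbit is heteroclinic (its $\alpha$- and $\omega$-limits lie among the finitely many periodic orbits), and distinct heteroclinic orbits, viewed through suitable compact sets, are once more $\delta$-separated in $2^X$.

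For sufficiency, assume $f$ has finitely many orbits and $\Omega f=\per_r\cup\per_a$. Then $X$ is a finite union of periodic orbits and finitely many heteroclinic orbits running from a repeller to an attractor, so the only accumulation points of $X$ lie in $\per_r\cup\per_a$ and every other point is isolated. Fix neighborhoods $U_r,U_a$ of the repellers and attractors; the set $X\setminus(U_r\cup U_a)$ is compact without accumulation points, hence finite, and I let $\rho_0>0$ be the minimal isolation radius of its points, shrunk below the minimal gap between distinct periodic orbits and below a uniform repelling/attracting radius for the finitely many periodic orbits. I claim $\rho_0$ is an expansiveness constant for $\hat f$. Given $A\neq B$, choose $x$ lying in one set but at positive distance from the other. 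If $x$ is heteroclinic, forward iteration drives its orbit out of $U_r$ into the region $X\setminus(U_r\cup U_a)$, where $f^nx$ is isolated by $\rho_0$ from every other point of $X$, in particular from $f^nB$, so $\dist_H(\hat f^nA,\hat f^nB)\geq\rho_0$. If $x$ is periodic, I iterate backward (for an attractor) or forward (for a repeller), so that the uniform repelling/expanding behavior pushes the other set a definite distance from the fixed location of $x$, again reaching separation at least $\rho_0$.

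The main obstacle is the recurrence-to-separation construction in the necessity direction: producing, from a non-periodic recurrent point or a saddle periodic orbit, two genuinely distinct compact sets whose $\hat f$-orbits remain $\delta$-close for all $n\in\Z$ simultaneously. The difficulty is that closures and limits in $2^X$ can collapse the candidate sets together (forcing $A=B$) or fail to control the finitely many middle iterates, so the estimates must be arranged to hold uniformly in both time directions while keeping $A$ and $B$ distinct; this is precisely where hyper-expansiveness, as opposed to mere expansiveness, is used.
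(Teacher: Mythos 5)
Your overall skeleton (finitely many periodic points via fixed points of $\hat f$; then $\Omega f=\per_r\cup\per_a$; then finiteness of orbits; then a separate sufficiency argument) matches the paper's, and the periodic-point count and the general ``nested pair $A\subset B$'' mechanism are correct. But the core of the necessity direction is not actually carried out, and it contains a logical gap. You propose to prove $\Omega f=\per_r\cup\per_a$ by ruling out (a) non-periodic \emph{recurrent} points and (b) saddle periodic orbits. Non-wandering points, however, need not be recurrent: a point of $\Omega f$ may fail to lie in its own $\omega$-limit set, and for such a point your construction --- which is explicitly built from ``the return times of $z$ to itself'' --- never gets started. The paper closes exactly this hole by routing everything through \emph{minimal} sets: every $\omega$-limit set contains a minimal set; by Lemma \ref{minper} minimal sets are periodic orbits, hence (after taking a power) attracting or repelling fixed points; and then $\omega(x)=\{p\}$ with $p$ an attractor, so every non-periodic point is wandering. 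Moreover, even for a genuinely recurrent non-periodic $z$, you do not resolve the difficulty you yourself flag: Hausdorff limits of growing orbit segments tend to produce $A=B$, and iterated orbit segments of a merely recurrent point need not stay $\epsilon$-dense in the orbit closure (the orbit can linger in a sub-region at some times $j$). What rescues the argument is the \emph{uniform} recurrence available only on a minimal set $K$: there is a single $n$ such that $O_nx=\{x,fx,\dots,f^nx\}$ is $\epsilon$-dense in $K$ for \emph{every} $x\in K$, whence $f^j(O_nx)=O_n(f^jx)$ is $\epsilon$-dense for all $j\in\Z$, so $\dist_H(f^j(O_nx),f^jK)<\epsilon$ for all $j$ and expansiveness of $\hat f$ forces $O_nx=K$ --- no limits and no contradiction pair are needed. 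Your saddle case is likewise only gestured at: one needs both a point $b$ with $f^j(b)\to p$ as $j\to\infty$ and a point $a$ with $f^j(a)\to p$ as $j\to-\infty$ (whose existence already uses expansiveness, as in Remark \ref{estasest}), and the forbidden pair must be taken as $A=\{f^n(b),f^{-n}(a)\}$, $B=A\cup\{p\}$, so that at every time at least one point of $A$ is $\delta$-close to $p$; sets ``collapsing onto the orbit'' in only one time direction each do not have $\delta$-close $\hat f$-orbits for all $n\in\Z$.

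There is also a concrete error in your sufficiency argument. With $U_r,U_a$ chosen arbitrarily, a heteroclinic orbit need not ever visit $X\setminus(U_r\cup U_a)$: it can jump from $U_r$ directly into $U_a$, and indeed $X\setminus(U_r\cup U_a)$ can be empty (take $X=\{p,q\}\cup\{z_n:n\in\Z\}$ with $z_n\to p$ as $n\to-\infty$, $z_n\to q$ as $n\to+\infty$, $U_r\supset\{z_n:n\leq 0\}$ and $U_a\supset\{z_n:n\geq 1\}$). So the step ``forward iteration drives its orbit into $X\setminus(U_r\cup U_a)$'' fails, and your $\rho_0$ is not even defined in this example. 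The repair is precisely the paper's device: fix one representative $x_i$ in each of the finitely many wandering orbits and take $\delta_2>0$ with $B_{\delta_2}(x_i)=\{x_i\}$; any wandering point in the symmetric difference of $A$ and $B$ can then be iterated exactly onto some $x_i$, producing separation $\geq\delta_2$ at that time. (Equivalently, you could shrink $U_r,U_a$ so as to exclude each $x_i$, but some such normalization must be made.) Your treatment of the periodic case of sufficiency is essentially the paper's and is fine at the same level of detail, granting the standard trapping-region property of attractors and repellers.
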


\begin{obs}
 It is easy to see that every expansive homeomorphism has a finite number of fixed points. 
Also, every compact $f$-invariant set $K\subset X$ (i.e., $f(K)=K$) is a fixed point of $\hat f$. 
So, if $f$ is hyper-expansive then $f$ has a finite number of compact invariant sets (in particular, it has finitely many periodic points). 
\end{obs}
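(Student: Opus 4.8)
The plan is to establish the three assertions in turn; only the first requires a genuine argument. For the statement that an expansive homeomorphism $g$ on a compact metric space, with expansive constant $\delta>0$, has finitely many fixed points, I would argue as follows. If $p$ and $q$ are distinct fixed points then $\dist(g^n p,g^n q)=\dist(p,q)$ for every $n\in\Z$, so the hypothesis of expansiveness can only fail, which forces $\dist(p,q)\geq\delta$. Thus the set $\fix(g)$ is $\delta$-separated, and since the ambient space is compact it admits no infinite $\delta$-separated subset; hence $\fix(g)$ is finite.

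The second assertion is immediate from the definition of the induced map: if $K\subset X$ is compact with $f(K)=K$, then $\hat f(K)=f(K)=K$, so $K$, viewed as a point of $2^X$, is a fixed point of $\hat f$.

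For the third assertion I would simply apply the first to the homeomorphism $\hat f$ on the hyperspace. Since $X$ is compact, $2^X$ is a compact metric space under $\dist_H$, and by definition hyper-expansiveness of $f$ is exactly expansiveness of $\hat f$. The first assertion therefore gives that $\hat f$ has only finitely many fixed points. By the second assertion every compact $f$-invariant set is one of these points, and distinct invariant sets are distinct points of $2^X$; hence there are only finitely many compact $f$-invariant sets. In particular each periodic orbit $\{p,fp,\dots,f^{k-1}p\}$ is a compact invariant set, so there are finitely many periodic orbits, and as each orbit is finite, finitely many periodic points.

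The argument presents no real obstacle; the only points demanding care are recalling that $2^X$ is itself a compact metric space, so that the elementary fixed-point bound applies verbatim to $\hat f$, and noting that the passage from ``finitely many invariant sets'' to ``finitely many periodic points'' relies on each periodic orbit being a distinct compact invariant set together with the finiteness of each orbit.
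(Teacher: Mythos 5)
Your proof is correct and follows exactly the route the paper sketches in the remark itself: the $\delta$-separation argument for fixed points of an expansive homeomorphism, the observation that compact $f$-invariant sets are fixed points of $\hat f$, and the application of the first fact to $\hat f$ on the compact metric space $(2^X,\dist_H)$. The details you supply (total boundedness ruling out an infinite $\delta$-separated set, and the passage from finitely many periodic orbits to finitely many periodic points) are precisely the ones the paper leaves to the reader.
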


A compact $f$-invariant set $K\subset X$ is said to be \emph{minimal} if for all $x\in K$ the orbit $\{x,fx,\dots,f^nx,\dots\}$ is dense in $K$.

\begin{lema}\label{minper}
 If $f\colon X\to X$ is hyper-expansive and $K\subset X$ is minimal then $K$ is finite (i.e., a periodic orbit).
\end{lema}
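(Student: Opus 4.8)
The plan is to prove the contrapositive in spirit: assuming $f$ is hyper-expansive and $K$ is minimal, I want to rule out the possibility that $K$ is infinite. The natural strategy is to exploit the expansivity constant $\delta>0$ of $\hat f$ together with the structure of minimal sets. Recall that a minimal set is either a single periodic orbit (finite) or is a perfect set (infinite, with no isolated points). So it suffices to derive a contradiction from the assumption that $K$ is an infinite minimal set.

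\medskip

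First I would observe that since $K$ is $f$-invariant, the restriction $f|_K$ is a homeomorphism of $K$ onto itself, and $K$ itself is a fixed point of $\hat f$. The key idea is to produce a pair of distinct compact subsets $A,B\subseteq K$ (or subsets of $X$ contained in a small neighborhood of $K$) whose entire $\hat f$-orbits stay within Hausdorff-distance $\delta$ of each other, contradicting expansiveness of $\hat f$. The most promising candidates are $A=K$ and $B=K\setminus\{\text{something}\}$ or, more robustly, two large finite or compact subsets of $K$ that differ slightly. Because $K$ has no isolated points, I can choose a point $q\in K$ and a small $\epsilon$-neighborhood and set $B = K\setminus B_\epsilon(q)$, a proper compact subset with $\dist_H(K,B)\le\epsilon$. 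The heart of the matter is to control $\dist_H(f^nK, f^nB) = \dist_H(K, f^nB)$ for all $n\in\Z$, and this is where minimality must be used: I would try to show that the ``hole'' $f^n(B_\epsilon(q)\cap K)$ can be made uniformly small in Hausdorff terms, so that $f^nB$ stays $\delta$-close to $K$ for every $n$.

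\medskip

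The main obstacle is precisely controlling the size of the iterated hole $f^n(B_\epsilon(q))$ uniformly in $n$; minimality alone does not bound diameters of iterates. To handle this I would instead use a density argument: by minimality, every point of $K$ is a limit of points in $B=K\setminus B_\epsilon(q)$ since removing one small ball from a perfect minimal set leaves a dense subset, so $\clos(B)$ could even equal $K$ if $B$ is taken as an open removal — but for compact $A,B$ I need genuine distinctness. A cleaner route is to take two distinct points $x,y\in K$ that are \emph{proximal} or whose orbits shadow each other, using that an infinite minimal set in a hyper-expansive (hence expansive) system would have to carry nontrivial recurrence incompatible with $\Omega f = \per_r\cup\per_a$. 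Alternatively, and this is the step I expect to be decisive, I would appeal to the Remark preceding the lemma: hyper-expansiveness forces finitely many compact invariant sets, and an infinite minimal set $K$ would generate infinitely many distinct compact invariant subsets (for instance via closures of orbit segments, or via the fact that a perfect minimal set contains a Cantor set with uncountably many closed invariant-like pieces under the return structure).

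\medskip

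Concretely, I expect the cleanest argument runs as follows: suppose $K$ is infinite and minimal, hence perfect. Pick any $x\in K$; by minimality the forward orbit of $x$ is dense in $K$, so the sets $A_n=\clos\{x,fx,\dots,f^nx\}$ form an increasing sequence of distinct compact sets with $A_n\to K$ in $\dist_H$. For $n$ large these are within $\delta/2$ of $K$, and since $\hat f(A_n)=\clos\{fx,\dots,f^{n+1}x\}$ tracks $A_n$ closely, one shows the $\hat f$-orbit of $A_n$ stays $\delta$-close to that of $A_{n+1}$ for suitable $n$, violating expansiveness. The delicate point is verifying closeness of the full bi-infinite orbits rather than just forward iterates, which is where I anticipate needing the minimal system's uniform recurrence (every point returns to every neighborhood with bounded gaps is \emph{not} automatic, but syndeticity of return times in minimal systems is) to control backward iterates as well. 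That uniform-recurrence input is the crux I would isolate and prove as the main technical step.
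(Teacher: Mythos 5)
Your final concrete argument (paragraph four) is essentially the paper's proof: the paper takes $O_nx=\{x,fx,\dots,f^nx\}$ with $n$ chosen, by exactly the uniform-recurrence fact you isolate as the crux, so that every such segment is $\epsilon$-dense in $K$ for $\epsilon$ an expansive constant of $\hat f$, observes that $f^j(O_nx)=O_n(f^jx)$ handles all $j\in\Z$ at once, and concludes $K=O_nx$ is finite; your variant comparing $A_n$ with $A_{n+1}$ instead of $O_nx$ with $K$ closes the same way, since $A_n=A_{n+1}$ forces $x$ to be periodic. Be warned, though, that the alternatives you float in your third paragraph are dead ends and not merely harder routes: a minimal set has, by definition, no proper nonempty closed invariant subsets, so an infinite minimal $K$ does \emph{not} generate infinitely many compact invariant sets, and invoking $\Omega f=\per_r\cup\per_a$ is circular because that equality is the conclusion of the main theorem, whose proof rests on this very lemma.
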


\begin{proof}
 Minimality implies that for all $\epsilon>0$ there is $n\geq 0$ such that for all $x\in X$ the set 
 $O_nx=\{x,fx,\dots,f^nx\}$ is $\epsilon$-dense in $K$ (i.e., for all $y\in K$ there is $j\in\{0,1,\dots,n\}$ such that 
 $\dist(y,f^j x)<\epsilon$). 
Therefore, $f^j(O_nx)$ is $\epsilon$-dense for all $j\in \Z$ because 
$f^j(O_nx)=O_n(f^jx)$.
 If $\epsilon$ is an expansive constant for $\hat f$ then $\dist_H(f^j(O_nx),f^j(K))<\epsilon$ for all $j\in \Z$. 
 Then $K=O_nx$ and $K$ is finite.
\end{proof}

\begin{obs}
In the previous proof the expansiveness was contradicted with two sets $K_1\subset K_2$. 
Notice that $\dist_H(A,B)\geq\dist_H(A,B\cup A)$, so $f$ is hyper-expansive if and only if there is $\delta>0$ such that 
if $A\subset B$, $A,B \in2^X$ and $\dist_H(\hat f^n A,\hat f^n B)<\delta$ for all $n\in \Z$ then $A=B$.
\end{obs}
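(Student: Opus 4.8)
The plan is to prove the two implications separately, noting that one is immediate and the other rests entirely on the displayed inequality together with the fact that $\hat f$ commutes with unions. For the easy direction, if $f$ is hyper-expansive with constant $\delta$, then the stated condition on nested pairs is just the restriction of the defining property to pairs $A\subset B$, so it holds with the same $\delta$.

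For the converse I would first verify the inequality $\dist_H(A,B)\geq\dist_H(A,B\cup A)$. The key observation is that $A\subset B\cup A$ automatically, so in computing $\dist_H(A,B\cup A)$ the containment $A\subset B_\epsilon(B\cup A)$ costs nothing and one is left with $\dist_H(A,B\cup A)=\inf\{\epsilon>0:B\subset B_\epsilon(A)\}$. Since $\dist_H(A,B)$ requires this same containment together with the extra condition $A\subset B_\epsilon(B)$, the inequality follows at once.

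Then I would combine the nested hypothesis with this inequality. Suppose the nested condition holds with constant $\delta$ and take arbitrary compact sets $A,B$ with $\dist_H(\hat f^nA,\hat f^nB)<\delta$ for all $n\in\Z$. Using $\hat f^n(A\cup B)=\hat f^nA\cup\hat f^nB$ and the inequality applied to $\hat f^nA$ and $\hat f^nB$, one gets $\dist_H(\hat f^nA,\hat f^n(A\cup B))\leq\dist_H(\hat f^nA,\hat f^nB)<\delta$ for every $n$. Since $A\subset A\cup B$ is a nested pair, the hypothesis yields $A=A\cup B$, that is $B\subset A$. Running the same argument with the roles of $A$ and $B$ exchanged gives $A\subset B$, hence $A=B$, which is exactly hyper-expansiveness with the same constant $\delta$.

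The argument is short and I do not expect a serious obstacle; the only point requiring care is the verification of the inequality, where one must remember that the Hausdorff distance is the infimum over those $\epsilon$ satisfying two symmetric containments and check that adjoining $A$ to $B$ trivializes precisely one of them.
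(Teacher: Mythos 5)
Your proposal is correct and follows exactly the route the paper intends: the remark's displayed inequality $\dist_H(A,B)\geq\dist_H(A,B\cup A)$ is precisely the tool you verify and then apply to the pair $A\subset A\cup B$ (using $\hat f^n(A\cup B)=\hat f^nA\cup \hat f^nB$) to reduce general pairs to nested ones. You have simply filled in the details that the paper leaves implicit, including the correct computation that adjoining $A$ to $B$ trivializes one of the two containments defining the Hausdorff distance.
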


We have that if $f$ is hyper-expansive then $f$ has a finite number of periodic points. 
Eventually taking a power of $f$ we can suppose that every periodic point is a fixed point. 
Recall that if a homeomorphism is expansive then its non-trivial powers are expansive too.
In the following Lemma we will need the next well known result.

\begin{obs}\label{estasest}
If $f$ is expansive and $p$ is a stable (unstable) periodic point then $p$ is an attractor (repeller). 
It can be proved as follows. 
Without loss of generality we can suppose that $p$ is a fixed point. 
By contradiction suppose that $p$ is stable but it is not asymptotically stable. 
Let $\delta>0$ be an expansive constant of $f$.
Therefore, there is a point $q\in B_\delta(p)$ such that 
$f^n q\in B_\delta(p)$ for all $n\geq 0$ but the $\omega$-limit set of $q$ is not $\{p\}$. 
So $p$ and a point $p'\in\omega(q)$, $p'\neq p$, contradict the expansiveness of $f$. 
\end{obs}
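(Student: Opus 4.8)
The plan is to argue by contradiction after reducing to the fixed-point case. Since non-trivial powers of an expansive homeomorphism are again expansive, and since $p$ is an attractor for $f$ exactly when it is an attractor for the iterate $f^k$ with $k$ its period, I would first replace $f$ by $f^k$ and assume that $p=f(p)$ is a fixed point, with $\delta>0$ an expansive constant. The claim to establish then reads: a stable fixed point of an expansive homeomorphism is asymptotically stable (the unstable case will follow by symmetry).

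First I would use stability to trap a forward orbit near $p$. Applying stability with $\epsilon=\delta/2$ produces $\eta>0$ such that $\dist(x,p)<\eta$ forces $\dist(f^nx,p)<\delta/2$ for all $n\geq 0$. If $p$ were stable but not asymptotically stable, then taking this $\eta$ in the role of the constant $\gamma$ from the definition of asymptotic stability yields a point $q$ with $\dist(q,p)<\eta$ whose forward orbit does not converge to $p$. By compactness $\omega(q)$ is nonempty, and since a single-point $\omega$-limit would force convergence, we have $\omega(q)\neq\{p\}$; hence I may choose $p'\in\omega(q)$ with $p'\neq p$.

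The core of the argument is to check that $p$ and $p'$ violate expansiveness. Because the entire forward orbit of $q$ lies in $B_{\delta/2}(p)$, we get $\omega(q)\subset\clos(B_{\delta/2}(p))$. As $\omega(q)$ is a compact $f$-invariant set, the whole two-sided orbit $\{f^np':n\in\Z\}$ remains inside $\omega(q)$, and therefore within distance $\delta/2<\delta$ of $p$. Since $p$ is fixed, $\dist(f^np,f^np')=\dist(p,f^np')\leq\delta/2$ for every $n\in\Z$, so expansiveness forces $p=p'$, contradicting $p'\neq p$. The unstable case is handled by applying this to $f^{-1}$, which is expansive with the same constant $\delta$.

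The step I expect to demand the most care is the passage from a one-sided to a two-sided estimate. Stability only bounds the forward iterates of $q$, whereas expansiveness needs control over all $n\in\Z$; the device that bridges this gap is the $f$-invariance of the $\omega$-limit set, which converts the trapped forward orbit of $q$ into an honest invariant set confined to a $\delta$-ball around $p$. Taking $\epsilon=\delta/2$ rather than $\delta$ keeps every estimate strictly below the expansive constant and so avoids any boundary subtlety in whether the constant is defined by a strict or a non-strict inequality.
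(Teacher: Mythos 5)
Your proof is correct and follows essentially the same route as the paper's: reduce to a fixed point, use stability to trap the forward orbit of $q$ inside a ball around $p$, and exploit the $f$-invariance of $\omega(q)$ to convert the one-sided bound into a two-sided one, so that $p$ and $p'\in\omega(q)$ violate expansiveness. Your choice of $\epsilon=\delta/2$ merely makes explicit a strict-versus-non-strict boundary detail that the paper glosses over (its orbit lies in $B_\delta(p)$, so $\omega(q)\subset\clos B_\delta(p)$ only gives $\dist\leq\delta$); this is a refinement of, not a departure from, the paper's argument.
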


\begin{lema}
 If $f$ is hyper-expansive then every fixed point of $f$ is an attractor or a repeller.
\end{lema}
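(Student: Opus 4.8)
The plan is to reduce the statement to a question about Lyapunov stability and then to contradict hyper-expansiveness. Since hyper-expansiveness implies expansiveness, Remark \ref{estasest} tells us that a stable fixed point is automatically an attractor and an unstable one a repeller; hence it suffices to prove that every fixed point is stable or unstable. As before I would pass to a power so that the periodic points are fixed, and then argue by contradiction: assume that $p$ is a fixed point which is neither stable nor unstable.

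First I would manufacture genuine asymptotic orbits out of the failure of stability. Non-stability provides an $\epsilon_0>0$ and points $x_k\to p$ whose forward orbits leave $\bar B_{\epsilon_0}(p)$; taking $n_k$ to be the first exit time one has $n_k\to\infty$ (otherwise continuity at the fixed point $p$ would be violated) and, after passing to a subsequence, the exit points $f^{n_k}x_k$ converge to some $y\neq p$ whose entire backward orbit stays in $\bar B_{\epsilon_0}(p)$. Thus $\alpha(y)$ is a compact invariant subset of $\bar B_{\epsilon_0}(p)$; choosing $\epsilon_0$ smaller than an expansive constant of $f$ and small enough that $p$ is the only periodic point in $\bar B_{\epsilon_0}(p)$ (there are finitely many), expansiveness together with Lemma \ref{minper} forces $\alpha(y)=\{p\}$, i.e. $f^{-n}y\to p$ with $y\neq p$. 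The symmetric argument, applied to $f^{-1}$ and to the failure of instability, yields $z\neq p$ with $f^{n}z\to p$.

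It remains to convert the nontrivial local unstable and stable sets at $p$ into a contradiction with hyper-expansiveness, and this is the step I expect to be the main obstacle. One cannot simply compare a nontrivial set with $\{p\}$: any compact set other than $\{p\}$ that stayed within $\delta$ of $p$ under all iterates would already contradict the expansiveness of $f$, so the violating pair must involve the unstable and the stable directions simultaneously. The natural candidates are the ``corner'' $K=\{p\}\cup\{f^{-n}y:n\ge0\}\cup\{f^{n}z:n\ge0\}$ and a one-step shift $\hat fK$, or, more robustly, the local sets $W^u_{\epsilon_0}(p)$ and $W^s_{\epsilon_0}(p)$, whose forward and backward iterates converge in $\dist_H$ to $\{p\}$ while their full orbits accumulate on the compact invariant sets $\overline{W^u(p)}$ and $\overline{W^s(p)}$. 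The difficulty is to keep $\dist_H(\hat f^nA,\hat f^nB)<\delta$ for \emph{all} $n$, that is, to control the single ``transition'' time at which the two sets differ the most. I would attack this either by a density argument in the spirit of Lemma \ref{minper}, or by showing that a fixed point which is neither an attractor nor a repeller forces infinitely many distinct compact invariant sets, contradicting the finiteness recorded in the Remark following Theorem \ref{teoprinc} (the same mechanism by which infinitely many periodic points obstruct hyper-expansiveness).
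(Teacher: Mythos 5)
Your first half is sound and essentially reproduces the paper's opening move: reduce via Remark \ref{estasest} to showing $p$ is stable or unstable, then extract from the failure of stability a point $a\neq p$ with $f^{j}(a)\to p$ as $j\to-\infty$, and symmetrically a point $b\neq p$ with $f^{j}(b)\to p$ as $j\to+\infty$. (Your detour through $\alpha(y)$ being a compact invariant set in a small ball is a legitimate way to justify this, arguably more carefully than the paper does.) But the second half is a genuine gap, and you correctly sensed where it is: you never produce a pair of distinct compact sets whose orbits stay Hausdorff-close. None of your three candidates works. Comparing $K=\{p\}\cup\{f^{-n}y:n\ge0\}\cup\{f^{n}z:n\ge0\}$ with $\hat fK$ fails because the symmetric difference of $f^{m}K$ and $f^{m+1}K$ involves $f^{m+1}y$ and $f^{m}z$, i.e.\ the \emph{forward} orbit of the unstable point and the \emph{backward} orbit of the stable point, which are completely uncontrolled; even at $m=0$ there is no reason for $\dist(f(y),K)$ to be small. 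Comparing $W^{u}_{\epsilon_0}(p)$ with $W^{s}_{\epsilon_0}(p)$ fails for the same reason: large positive iterates of $W^{u}_{\epsilon_0}(p)$ can be huge while those of $W^{s}_{\epsilon_0}(p)$ stay near $p$. And the route via ``infinitely many compact invariant sets'' cannot be made to work non-circularly: one bad fixed point only yields finitely many orbit closures, and the finiteness of orbits is proved in Theorem \ref{teoprinc} \emph{using} this lemma.

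The missing idea is to make the two sets differ by the single point $p$ and to use only \emph{two} auxiliary points, one deep in the stable set and one deep in the unstable set. Choose $n$ so that $f^{m}(b)\in B_\delta(p)$ and $f^{-m}(a)\in B_\delta(p)$ for all $m\ge n$, and set $A=\{f^{n}(b),f^{-n}(a)\}$, $B=A\cup\{p\}$. Since $A\subset B$ and $B\setminus A=\{p\}$, one has $\dist_H(f^{m}A,f^{m}B)=\dist(p,f^{m}A)$ for every $m\in\Z$; for $m\ge0$ this is at most $\dist(p,f^{m+n}(b))<\delta$, and for $m\le0$ it is at most $\dist(p,f^{m-n}(a))<\delta$. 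So the stable point covers all positive times, the unstable point covers all negative times, there is no ``transition time'' to control at all, and $A\neq B$ contradicts the expansiveness of $\hat f$. This is exactly the mechanism the paper uses (and it is why the paper's remark that one may restrict attention to pairs $A\subset B$ matters): your instinct that both directions must enter simultaneously was right, but the resolution is not to shift a single infinite set by one step; it is to compare a two-point set with its union with $\{p\}$.
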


\begin{proof}
 By contradiction suppose that $p$ is a fixed point of $f$ that is neither attractor nor repeller. 
 Since $p$ is not an attractor, $p$ is not stable (Remark \ref{estasest}). 
 So, there is $\epsilon>0$ and a sequence $x_n$ such that $x_n\to p$ as $n\to\infty$ and for some $k_n>0$, 
 $f^{k_n}(x_n)\notin B_\epsilon(p)$. Suppose that for all $k<k_n$, $f^k(x_n)\in B_\epsilon(p)$. 
 Assume that $a_n=f^{k_n-1}(x_n)$ converges to $a\in\clos B_\epsilon(p)$. 
 It is easy to see that $f^j(a)\to p$ as $j\to-\infty$ and $a\neq p$. 
 
 Similarly, using that $p$ is not unstable, one can prove that there is $b\neq p$ such that $f^j(b)\to p$ as $j\to\infty$. 
Let $\delta>0$ be an expansive constant for $\hat f$. 
Take $n\geq 0$ such that $f^m(b),f^{-m}(a)\in B_\delta(p)$ for all $m\geq n$. 
Let $A=\{f^n(b),f^{-n}(a)\}$ and $B=A\cup\{p\}$. 
So, $A\neq B$ and $\dist_H(f^nA,f^nB)<\delta$ for all $n\in \Z$. That contradicts the expansiveness of $\hat f$.
\end{proof}

Now we prove our main result.

\begin{proof}(of Theorem \ref{teoprinc})
 \emph{Direct}. 
 Suppose that $f$ is hyper-expansive. 
 We have proved that there is a finite number of periodic points. 
 So, eventually taking a power of $f$ we can suppose that every periodic point is in fact a fixed point. 
 If there are only fixed points, there is nothing to prove ($X$ is finite). 
 So, suppose that $x\in X$ is not a fixed point. 
 Consider the $\omega$-limit set $\omega(x)$. 
 It is a compact invariant set, therefore it contains a minimal set, say $K$. 
 We have proved that every minimal set is a periodic orbit, so, it is a fixed point $K=\{p\}$. 
 It is easy to see that $\omega(x)=\{p\}$, since $p$ must be an attractor. 
 In particular $x$ is a wandering point. Then, we have proved that $\Omega(f)=\per_a\cup\per_r$. 
 
 Now we will prove that there is a finite number of orbits. 
 It is easy to see that for all $\epsilon>0$ there is $N\geq 0$ such that 
 if $x\notin B_\epsilon(\Omega(f))$ then $f^jx,f^kx\in B_\epsilon(\Omega(f))$ if $j\leq-N$ and $k\geq N$. 

 If $f$ has an infinite number of orbits and $\epsilon>0$ is smaller than an expansive constant for $\hat f$, then $X\setminus B_\epsilon(\Omega(f))$ is a compact infinite set. 
 So, there are $x,y\notin B_\epsilon(\Omega(f))$ and $p,q\in \Omega (f)$ such that $\omega(x)=\omega(y)=\{p\}$ and 
 $\alpha(x)=\alpha(y)=\{q\}$. 
 Then, if $\dist(x,y)$ is small, this two points contradicts the expansiveness of $f$ (and hyper-expansiveness too). 
 This contradiction proves that there is a finite number of orbits. 
 
 \emph{Converse}. 
 Again, eventually taking a power of $f$, we can assume that every periodic point of $f$ is a fixed point. 
 Let $\delta_1>0$ be such that $\cap_{n\geq 0}f^n(B_{\delta_1}(\per_a))=\per_a$ 
 and $\cap_{n\leq 0}f^n(B_{\delta_1}(\per_r))=\per_r$.
 Take $x_1,\dots,x_n$ one point of each wandering orbit of $f$. 
 Let $\delta_2>0$ be such that $B_{\delta_2}(x_i)=\{x_i\}$ for all $i=1,\dots, n$. 
 We will show that $\delta=\min\{\delta_1,\delta_2\}$ is an expansive constant for $\hat f$. 
 Let $A,B$ be two compact sets such that $\dist_H(f^nA,f^nB)<\delta$ for all $n\in \Z$. 
 If there is a wandering point $x$ such that $x\in A\setminus B$ then there is $k\in\Z$ and $i\in\{1,\dots,n\}$ such that 
 $f^kx=x_i$. So, $\dist_H(f^kA,f^kB)>\delta_2$. 
 This contradiction proves that the wandering points of $A$ and $B$ coincide. 
 If $A\neq B$ then there is a fixed point $p\in A\setminus B$ (similarly for $p\in B\setminus A$). 
 Without loss of generality suppose that $p$ is a repeller. 
 Since $p\notin B$ then there is $\epsilon>0$ such that $B_\epsilon(p)\cap B=\emptyset$.
 Take $n$ such that $B_{\delta_1} (p)\cap f^nB=\emptyset$. Since $p\in f^nA$ for all $n\in \Z$, we have that 
 $\dist_H(f^nA,f^nB)>\delta_1$, wich is a contradiction. So $f$ is hyper-expansive.
 \end{proof}

A simple consequence of the previous result is that if $X$ admits a hyper-expansive homeomorphism 
then $X$ is countable. As we will see, the converse is not true.
Let $$\iso(X)=\{x\in X:\hbox{there is } \epsilon>0\hbox{ such that } B_\epsilon(x)\cap X=\{x\}\}$$
and 
$$\acu(X)=X\setminus \iso(X).$$ 
The cardinality of a set $A$ is denoted as $|A|$.
\begin{teo}
\label{teocharact}
A compact metric space $X$ admits a hyper-expansive homeomorphism if and only if 
$2\leq |\acu(X)|<\infty$ or $\acu(X)=\emptyset$ (i.e., $X$ is finite).
\end{teo}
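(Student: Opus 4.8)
The plan is to reduce everything to the dynamical characterization of Theorem \ref{teoprinc} together with the elementary fact that, in a compact metric space, the isolated points $\iso(X)$ form a countable set whose only possible accumulation points lie in $\acu(X)$. The finite case is immediate: if $\acu(X)=\emptyset$ then $X$ is finite, and the identity (or any homeomorphism, since on a finite metric space every homeomorphism is expansive and every point is an isolated fixed point, hence both an attractor and a repeller) satisfies the hypotheses of Theorem \ref{teoprinc} and is therefore hyper-expansive. Thus the real content is the equivalence, for infinite $X$, between admitting a hyper-expansive homeomorphism and $2\le|\acu(X)|<\infty$.

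For the \emph{direct} implication, suppose $f$ is hyper-expansive. Replacing $f$ by a suitable power (powers of expansive maps are expansive and $\widehat{f^k}=\hat f^{\,k}$, so both hyper-expansiveness and the finiteness of orbits are preserved) I may assume every periodic point is fixed. By Theorem \ref{teoprinc} there are finitely many orbits and $\Omega f=\per_r\cup\per_a$, and by the Remark following it $\fix(f)$ is finite; every non-fixed orbit is then wandering, with $\omega(x)$ a single attracting fixed point and $\alpha(x)$ a single repelling fixed point. The key step is to show $\acu(X)\subseteq\fix(f)$: given distinct points $y_k\to z$, a pigeonhole argument (finitely many orbits) places a subsequence inside one orbit, and a bi-infinite orbit has only finitely many points outside any neighborhood of its two limit fixed points, forcing $z$ to be one of those fixed points. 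This yields $|\acu(X)|<\infty$. If moreover $X$ is infinite, some orbit is infinite, hence wandering, and it accumulates both at its attracting limit $p$ and at its repelling limit $q$; since a point cannot be simultaneously an attractor and a repeller (an attractor is Lyapunov stable, a repelling accumulation point is not), $p\neq q$, so $\{p,q\}\subseteq\acu(X)$ and $|\acu(X)|\ge2$.

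For the \emph{converse}, assume $2\le|\acu(X)|<\infty$, say $\acu(X)=\{p_1,\dots,p_m\}$ with $m\ge2$. First I would decompose the isolated points by nearest accumulation point: setting $C_i=\{x\in\iso(X): p_i\text{ is the (chosen) closest point of }\acu(X)\text{ to }x\}$, one checks that any sequence in $C_i$ that converges in $X$ must converge to $p_i$, so $C_i\cup\{p_i\}$ is a convergent sequence (a compact countable set with unique accumulation point $p_i$), and each $C_i$ is infinite because $p_i\in\acu(X)$. I then declare $p_1$ a repeller and $p_2,\dots,p_m$ attractors, split $C_1$ into $m-1$ infinite pieces $C_1^{(2)},\dots,C_1^{(m)}$, and for each $j\ge2$ build one bi-infinite orbit $O_j$ whose backward tail enumerates $C_1^{(j)}$ and whose forward tail enumerates $C_j$. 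The map $f$ fixes each $p_i$ and shifts each $O_j$ by one step; it is a bijection covering all of $X$, and since any injective enumeration of an infinite subset of a convergent sequence tends to its limit, the prescribed dynamical roles of the $p_i$ hold. Theorem \ref{teoprinc} then gives hyper-expansiveness.

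The main obstacle I expect lies in the converse: verifying that this $f$ is genuinely a homeomorphism and that each $p_i$ is an honest attractor or repeller. Continuity must be checked at the points of $\acu(X)$ (at isolated points it is automatic), where one has to control the finitely many ``pivot'' points of the $O_j$ that leave $C_1$ and enter some $C_j$; away from these, the successor of a point close to $p_i$ is again close to $p_i$, which gives continuity of $f$ and, symmetrically, of $f^{-1}$. The remaining verifications — that the orbits are wandering, that $\Omega f=\per_r\cup\per_a$, and that there are finitely many orbits — are routine and let Theorem \ref{teoprinc} close the argument.
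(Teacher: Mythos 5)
Your proposal is correct, and while your direct implication is essentially the paper's, your converse takes a genuinely different route. For the direct part, the paper likewise deduces from Theorem \ref{teoprinc} that $\acu(X)\subset\Omega(f)=\per_r\cup\per_a$ (your pigeonhole argument is precisely the justification behind the paper's one-line claim that wandering points must be isolated when there are finitely many orbits), and gets $|\acu(X)|\geq 2$ for infinite $X$ from the presence of both an attractor and a repeller. For the converse, the paper argues extrinsically: it observes that $X$ is countable, hence $\dim_{top}X=0$, invokes the Hurewicz--Wallman embedding theorem to realize $X\subset\R$, normalizes the embedding so that the limit points $p_1<\dots<p_n$ satisfy $X\subset[p_1,p_n]$ with accumulation on the correct sides, and defines $f$ on each block $I_j=X\cap(p_j,p_{j+1})$ as the successor map (for $j$ odd) or predecessor map (for $j$ even), so the $p_j$ are alternately repellers and attractors and each $I_j$ is automatically a single wandering orbit. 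Your construction is intrinsic: you partition $\iso(X)$ by nearest accumulation point, designate a single repeller $p_1$ and $m-1$ attractors, split $C_1$ into $m-1$ infinite pieces, and build one bi-infinite shift orbit per attractor. Both routes reduce to Theorem \ref{teoprinc} and both leave comparable routine verifications to the reader: you flag continuity at the $p_i$ and the finitely many pivot points (and your check does go through, by the observation that convergence to $p_i$ forces the enumeration indices to diverge), while the paper neither justifies the ``we can also suppose'' re-embedding step nor verifies continuity of its successor map. What your approach buys is the elimination of the dimension-theoretic machinery and of the re-embedding; what the paper's buys is that the linear order of $\R$ makes the orbit structure (each $I_j$ is exactly one orbit) and the attractor/repeller roles geometrically immediate.
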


\begin{proof}
By Theorem \ref{teoprinc} we have that $\acu(X)\subset\Omega(f)$ that is because wandering points must be isolated. 
So, $\acu(X)$ is finite. If $X$ is infinite, there must be at least one attractor and one repeller, so $\acu(X)\geq 2$.

In order to prove the converse notice that if the set of limit points is finite then $X$ 
is countable.
Consider an infinite countable space $X$ (the finite case is trivial). 
Since every infinite continuum is uncontable, we have that $\dim_{top}(X)=0$. 
It is known that if $\dim_{top}(X)\leq n$ then $X$ is homeomorphic to a compact subset of $\R^{2n+1}$, 
see Theorem V2 in \cite{HW}. 
So, without loss of generality, we can assume that $X\subset \R$. 
Let $p_1<\dots<p_n\in X$, $n\geq 2$, be the limit points of $X$. 
We can also suppose that $X\subset [p_1,p_n]$ and for all 
$\epsilon>0$ we have that 
\begin{itemize}
  \item $X\cap (p_j,p_j+\epsilon)\neq\emptyset$ for all $j=1,\dots,n-1$ and 
  \item $X\cap (p_j-\epsilon,p_j)\neq\emptyset$ for all $j=2,\dots,n$
\end{itemize}

Define $I_j=X\cap (p_j,p_{j+1})$ for $j=1\dots,n-1$.
Now we define $f\colon X\to X$ as follows:
\begin{itemize}
 \item $f(p_j)=p_j$ for all $j=1,\dots,n$,
 \item if $x\in I_j$ and $j$ is odd then $f(x)$ is the first point of $X$ at the right of $x$ and
 \item if $x\in I_j$ and $j$ is even then $f(x)$ is the first point of $X$ at the left of $x$.
\end{itemize}
In this way $p_j$ is a repeller fixed point if $j$ is odd and it is an attractor if $j$ is even. 
So, by Theorem \ref{teoprinc} we have that $f$ is hyper-expansive.
\end{proof}

Since hyper-expansiveness is a very strong condition, we have that \emph{most} homeomorphisms 
satisfy the following result.

\begin{cor}
If $f\colon X\to X$ is a homeomorphism of a compact metric space $X$ and $|\acu(X)|=\infty$ then for all $\epsilon>0$ 
there are two different compact sets $A,B\subset X$ such that 
$$\dist_H(f^nA,f^nB)<\epsilon, \hbox{ for all } n\in\Z.$$
\end{cor}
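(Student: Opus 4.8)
The plan is to recognize that the conclusion is nothing but the logical negation of hyper-expansiveness, and to obtain that negation for \emph{every} homeomorphism of $X$ from the classification already proved. First I would invoke Theorem \ref{teocharact}: since $|\acu(X)|=\infty$, the space $X$ satisfies neither $2\le|\acu(X)|<\infty$ nor $\acu(X)=\emptyset$, so $X$ admits no hyper-expansive homeomorphism. In particular, the given map $f$ fails to be hyper-expansive.

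Next I would simply unwind the definition. To say that $f$ is not hyper-expansive is to say that no $\delta>0$ is an expansive constant for $\hat f$; that is, for every $\delta>0$ the defining implication fails, which means there exist distinct compact sets $A\neq B$ with $\dist_H(f^nA,f^nB)<\delta$ for all $n\in\Z$. Taking $\delta=\epsilon$ for the prescribed $\epsilon>0$ produces exactly the pair $A,B$ demanded by the statement, so the corollary follows at once.

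There is essentially no obstacle here beyond bookkeeping, and the single point requiring care is the order of quantifiers. Hyper-expansiveness asserts the existence of \emph{one} constant $\delta$ that works for all pairs of compact sets, so its negation is a for-all-$\delta$ statement; this is precisely why the conclusion can be asserted for every $\epsilon>0$ rather than only for one fixed threshold. Since Theorem \ref{teocharact} rules out hyper-expansiveness for the whole space $X$ (and hence for the particular $f$ at hand), no explicit dynamical construction of the pair $A,B$ is needed in the argument itself.
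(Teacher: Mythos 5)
Your proposal is correct and is exactly the paper's intended argument: the paper dismisses this corollary with the single line ``It is a simple consequence of our previous result,'' meaning precisely your route of applying Theorem \ref{teocharact} to rule out hyper-expansiveness of $f$ and then unwinding the negation of the definition (for all $\delta>0$ there exist distinct compact $A,B$ with $\dist_H(f^nA,f^nB)<\delta$ for all $n\in\Z$). Your explicit attention to the quantifier order is a faithful, slightly more detailed write-up of the same proof.
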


It is a simple consequence of our previous result. 
It holds for example if $X$ is a manifold of positive dimension, 
a non-trivial connected space or a Cantor set.

Let us now give some examples and final remarks.

\begin{ejp}
Let $X=\{0\}\cup\{1/n:n\in \N\}$. 
Since $X$ has just one limit point we have that $X$ 
does not admit hyper-expansive homeomorphisms,
but it is easy to see that it 
admits an expansive one.
\end{ejp}

Countable compact spaces admiting expansive homeomorphisms can be characterized as follows. 
Recall that $\acu^{\lambda+1}(X)=\acu(\acu^\lambda(X))$, $\acu^1(X)=\acu(X)$ and
$$\acu^\lambda(X)=\bigcap_{\alpha<\lambda} \acu^\alpha(X)$$
for every limit ordinal number $\lambda$. 
The \emph{limit degree} of $X$ is the ordinal number 
$d(X)=\lambda$ if $\acu^\lambda(X)\neq\emptyset$ and $\acu^{\lambda+1}(X)=\emptyset$. 
In \cite{KP} (Theorem 2.2) it is shown that a countable compact space $X$ admits an expansive homeomorphism 
if and only if $d(X)$ is not a limit ordinal number. 

\begin{obs}
 Applying Theorem \ref{teocharact} we have that $X$ admits a hyper-expansive homeomorphism if and only if 
$d(X)\leq 1$ and $|\acu(X)|\neq 1$.
\end{obs}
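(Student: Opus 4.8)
The plan is to deduce this equivalence directly from Theorem~\ref{teocharact} by rewriting its two alternatives in terms of the limit degree. Recall that Theorem~\ref{teocharact} asserts that $X$ admits a hyper-expansive homeomorphism precisely when $\acu(X)=\emptyset$ or $2\leq|\acu(X)|<\infty$. I would first record the elementary facts about the derived sets that the translation needs: the set $\acu(X)$ of accumulation points is closed, hence compact since $X$ is compact; a finite set has no accumulation points, so $\acu^2(X)=\acu(\acu(X))=\emptyset$; and conversely, if $\acu^2(X)=\emptyset$ then $\acu(X)$ is discrete, and being closed in $X$ it is compact, hence finite. I also fix the convention $\acu^0(X)=X$, so that for nonempty $X$ the condition $\acu(X)=\emptyset$ is exactly the statement $d(X)=0$.

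With these in hand I would prove the two implications separately. For the forward direction, if $\acu(X)=\emptyset$ then $d(X)=0\leq 1$ and $|\acu(X)|=0\neq 1$; if $2\leq|\acu(X)|<\infty$ then $\acu(X)$ is finite and nonempty, so $\acu^1(X)\neq\emptyset$ while $\acu^2(X)=\emptyset$, giving $d(X)=1\leq 1$ and $|\acu(X)|\geq 2\neq 1$. For the converse, assume $d(X)\leq 1$ and $|\acu(X)|\neq 1$. If $d(X)=0$ then $\acu(X)=\acu^1(X)=\emptyset$, which is the finite case of Theorem~\ref{teocharact}. If $d(X)=1$ then $\acu^1(X)=\acu(X)\neq\emptyset$ and $\acu^2(X)=\emptyset$, so by the facts above $\acu(X)$ is finite and nonempty; combined with $|\acu(X)|\neq 1$ this yields $2\leq|\acu(X)|<\infty$. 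In either case Theorem~\ref{teocharact} produces a hyper-expansive homeomorphism.

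The only genuinely delicate point is the role of the hypothesis $|\acu(X)|\neq 1$: without it the value $d(X)=1$ would also be realized by spaces with a single accumulation point, such as $\{0\}\cup\{1/n:n\in\N\}$, which by the earlier Example admits no hyper-expansive homeomorphism. Thus the extra condition is exactly what separates the admissible case $|\acu(X)|\geq 2$ from the forbidden case $|\acu(X)|=1$ inside the single ordinal value $d(X)=1$. I expect no further obstacle, since everything reduces to unwinding the definition of the limit degree together with the standard fact that a compact discrete space is finite.
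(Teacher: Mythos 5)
Your proposal is correct and follows exactly the route the paper intends: the remark is stated as a direct application of Theorem~\ref{teocharact}, and your argument is precisely that translation, with the elementary facts (a derived set is closed, a compact discrete set is finite, the convention $\acu^0(X)=X$ so that $d(X)=0$ means $\acu(X)=\emptyset$) spelled out. No gap; your unwinding of $d(X)\leq 1$ and $|\acu(X)|\neq 1$ into the two alternatives $\acu(X)=\emptyset$ or $2\leq|\acu(X)|<\infty$ is exactly what the paper leaves implicit.
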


It seems to be of interest to provide an example of a countable compact space do not admiting expansive homeomorphisms.

\begin{ejp}
Given $A \subset \R$ we say that $(a,b)\in A\times A$ is an \emph{adjacent pair} if 
there are no points of $A$ in the open interval $(a,b)$. The set of adjacent pairs is denoted as
$$\adj(A)=\{(a,b)\in A\times A:a<b, (a,b)\cap A=\emptyset\}.$$ 
Let $A_0=\{0\}\cup\{1/n:n\in \N\}$ and 
$$A_{n+1}=A_n\cup\bigcup_{(a,b)\in\adj(A_n\cap [0,1/n])}\{a+(b-a)/m:m\in\N\}.$$ 
Define $X=\cup_{n=0}^\infty A_n$. It is easy to see that it is a compact set and it is countable by construction. 
Notice that $d(X)$ is the first infinite ordinal number and therefore it is a limit ordinal number. 
To see that it does not admit an expansive homeomorphism notice that if $f\colon X\to X$ is a homeomorphism 
then $\acu^\lambda(X)$ is an $f$-invariant set for all ordinal number $\lambda$. 
Now notice that for all $\epsilon>0$ there is a finite ordinal number $\lambda$ such that 
$\acu^\lambda(X)\subset [0,\epsilon]$ and $\acu^\lambda(X)$ is an infinite set. 
Therefore, every pair of different points $x,y\in \acu^\lambda(X)$ contradict the $\epsilon$-expansiveness of $f$. 
Since $\epsilon$ is arbitrary we have that $X$ does not admit expansive homeomorphisms. 
\end{ejp}

\end{document}